\theoremstyle{plain}
\newtheorem{theorem}{Theorem}
\numberwithin{theorem}{section}
\newtheorem{corollary}[theorem]{Corollary}
\newtheorem*{corollary*}{Corollary}
\newtheorem*{Example*}{Example}
\newtheorem{lemma}[theorem]{Lemma}
\theoremstyle{definition}
\newtheorem*{def*}{Definition}
\newtheorem*{theorem*}{Theorem}
\newtheorem*{definition*}{Definition}
\theoremstyle{remark}
\newcommand{\bracket}[1]{\left( #1 \right)}
\newcommand{\notmodulo}[3]{#1\not\equiv#2\ \bracket{\mathrm{mod}\ #3}}
\newcommand{\modulo}[3]{#1\equiv#2\ \bracket{\mathrm{mod}\ #3}}
\numberwithin{equation}{section}
\title{\textbf{Fibonacci-like property of partition function}}
\author{QI-YANG ZHENG}
\date{} % Don't show current time
\address{Department of Mathematics, Sun Yat-sen University(Zhuhai Campus), Zhuhai}
\email{zhengqy29@mail2.sysu.edu.cn}
\begin{document}
\maketitle

\begin{abstract}
The main result of the paper is the Fibonacci-like property of the partition function. The partition function $p(n)$ has a property: $p(n) \leq p(n-1) + p(n-2)$. Our result shows that if we impose certain restrictions on the partition, then the inequality becomes an equality. Furthermore, we extend this result to cases with a greater number of summands.
\end{abstract}

% \tableofcontents

\section{Introduction}

\noindent
In number theory and combinatorics, a partition of a non-negative integer $n$, also called an integer partition, is a way of representing $n$ as a sum of positive integers. Two sums that differ only in the order of their summands are considered to be the same partition. We denote the partition function by $p(n)$.

One can employ elementary methods to demonstrate that $p(n) \leq p(n-1) + p(n-2)$ for $n \geq 2$ (see \cite[(3.8)]{andrews2004integer}). Our result provides an explicit formulation of this equation when certain restrictions are imposed. The theorem is outlined below:

\begin{theorem}
    \label{Fibonacci-like}
    For $n\geq2$, we have
    \begin{equation}
        \notag
        \abovedisplayskip=1em
        \begin{aligned}
            &\ \ \ \ p(n\mid\notmodulo{\mathrm{parts}}{12,15,27}{27}) \\
            &=p(n-1\mid\notmodulo{\mathrm{parts}}{6,21,27}{27}) \\
            &+p(n-2\mid\notmodulo{\mathrm{parts}}{3,24,27}{27}). \\
        \end{aligned}
        \belowdisplayskip=1em
    \end{equation}
\end{theorem}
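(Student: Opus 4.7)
The plan is to pass to generating functions, apply the Jacobi triple product to rewrite each relevant product as a bilateral theta series, and then exploit Euler's pentagonal number theorem split modulo $3$.

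First I would set
\[
A(q) := \sum_{n \geq 0} p(n \mid \notmodulo{\mathrm{parts}}{12,15,27}{27})\, q^n = \frac{(q^{12}, q^{15}, q^{27}; q^{27})_\infty}{(q;q)_\infty},
\]
and define $B(q), C(q)$ analogously with numerators $(q^6, q^{21}, q^{27}; q^{27})_\infty$ and $(q^3, q^{24}, q^{27}; q^{27})_\infty$ respectively. A direct check of $n=0,1$ shows the theorem is equivalent to the clean generating-function identity $A(q) = 1 + qB(q) + q^2 C(q)$, or, after clearing denominators,
\[
(q^{12}, q^{15}, q^{27}; q^{27})_\infty = (q;q)_\infty + q(q^6, q^{21}, q^{27}; q^{27})_\infty + q^2(q^3, q^{24}, q^{27}; q^{27})_\infty.
\]

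Next I would apply the Jacobi triple product to each of the three triple products: each has the shape $(z, q^{27}/z, q^{27}; q^{27})_\infty$ with $z = q^{12}, q^6, q^3$, so each rewrites as a bilateral theta series $\sum_k (-1)^k q^{3k(9k-1)/2}$, $\sum_k (-1)^k q^{3k(9k-5)/2}$, and $\sum_k (-1)^k q^{3k(9k-7)/2}$ respectively. I would then apply Euler's pentagonal number theorem $(q;q)_\infty = \sum_{n \in \bbZ} (-1)^n q^{n(3n-1)/2}$ and split the sum according to $n \pmod 3$. A short algebraic computation shows: the $n = 3k$ contribution is exactly $\sum_k (-1)^k q^{3k(9k-1)/2}$, i.e., the expansion of $(q^{12}, q^{15}, q^{27}; q^{27})_\infty$; the $n = 3k+1$ contribution is $-q\sum_k (-1)^k q^{3k(9k+5)/2}$, which after the reflection $k \mapsto -k$ becomes $-q\cdot(q^6, q^{21}, q^{27}; q^{27})_\infty$; and the $n = 3k-1$ contribution matches $-q^2\cdot(q^3, q^{24}, q^{27}; q^{27})_\infty$ directly. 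Adding the three pieces yields the required identity.

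The main---and essentially only---obstacle will be the careful sign-and-index bookkeeping in the splitting step: one must verify that $(-1)^{3k\pm 1} = -(-1)^k$, that $(3k+1)(9k+2)/2 = 1 + 3k(9k+5)/2$ and $(3k-1)(9k-4)/2 = 2 + 3k(9k-7)/2$, and that the reflection $k \mapsto -k$ sends $3k(9k+5)/2$ to $3k(9k-5)/2$ while preserving $(-1)^k$. These verifications are all routine once isolated. Conceptually, the specific residues $\{12,15\}$, $\{6,21\}$, $\{3,24\}$ modulo $27$ arising in the theorem are forced by precisely this split: they are the three residue classes of $n \pmod 3$ in the pentagonal expansion, translated into Jacobi triple product arguments with common base $q^{27}$.
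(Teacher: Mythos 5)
Your proposal is correct, and it is essentially the paper's own argument specialized to $m=1$: the paper likewise splits Euler's pentagonal series into residue classes modulo $2m+1$ (here $3$) and identifies each piece with a triple product via the Jacobi triple product, arriving at the same identity $(q;q)_\infty = (q^{12},q^{15},q^{27};q^{27})_\infty - q\,(q^{6},q^{21},q^{27};q^{27})_\infty - q^{2}\,(q^{3},q^{24},q^{27};q^{27})_\infty$ before dividing by $(q;q)_\infty$ and comparing coefficients. The only cosmetic differences are the direction in which you apply the triple product (product to sum rather than sum to product) and that the paper carries out the computation uniformly for all $m$.
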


\noindent
For example, let $n = 29$. We find that $p(27) = 3010$, $p(28) = 3718$, and $p(29) = 4565$. However,
\begin{equation}
    \notag
    \abovedisplayskip=1em
    \begin{aligned}
        p(29\mid\notmodulo{\mathrm{parts}}{12,15,27}{27})=4133, \\
        p(28\mid\notmodulo{\mathrm{parts}}{6,21,27}{27})=2701, \\
        p(27\mid\notmodulo{\mathrm{parts}}{3,24,27}{27})=1432. \\
    \end{aligned}
    \belowdisplayskip=1em
\end{equation}

\noindent
In fact, we prove something more. Recall that the partition function possesses a recurrence formula,
\begin{equation}
    \notag
    \abovedisplayskip=1em
    \begin{aligned}
        p(n)&=\sum_{k\in\mathbb Z\backslash\{0\}}(-1)^{k+1}p(n-k(3k-1)/2)\\
        &=p(n-1)+p(n-2)-p(n-5)-p(n-7)+\cdots
    \end{aligned}
    \belowdisplayskip=1em
\end{equation}

\noindent
Our result shows that we can truncate the formula at any even position, provided certain restrictions are applied to the partition. The precise statement is as follows:

\begin{theorem}
    \label{general case}
    For every integer $m\geq1$ and $n\geq m(3m+1)/2$, we have
    \begin{equation}
        \notag
        \abovedisplayskip=1em
        \begin{aligned}
            0&=p\left(n\mid{\mathrm{parts}}\not\equiv{0,(2m+1)(3m+1),(2m+1)(3m+2)}\right)+\sum_{i=1}^m(-1)^i \\
            &\left.\left[p\left(n-\frac{i(3i-1)}{2}\,\right|\right.{\mathrm{parts}}\not\equiv{0,(2m+1)(3m-3i+2),(2m+1)(3m+3i+1)}\right) \\
            &\left.\left.+p\left(n-\frac{i(3i+1)}{2}\,\right|\left.{\mathrm{parts}}\not\equiv{0,(2m+1)(3m-3i+1),(2m+1)(3m+3i+2)}\right. \right)\right],
        \end{aligned}
        \belowdisplayskip=1em
    \end{equation}

    \noindent
    where all three congruences are taken modulo $3(2m+1)^2$.
\end{theorem}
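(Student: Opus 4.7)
My plan is to pass to generating functions, apply the Jacobi triple product to each restricted product, and reduce the identity to a theta-function identity that collapses to Euler's function. Throughout let $N=2m+1$, $R(q)=\prod_{k\ge 1}(1-q^k)^{-1}$, $\phi(q)=R(q)^{-1}$, and $g_k:=k(3k-1)/2$. Each exponent $a$ appearing in the statement, namely $N(3m+1)$ together with $N(3m-3i+1)$ and $N(3m-3i+2)$ for $1\le i\le m$, lies strictly between $0$ and $3N^2/2$, so $\{0,\pm a\}$ furnishes three distinct nonzero residues modulo $3N^2$. Splitting $\prod_k (1-q^k)^{-1}$ across these residue classes and applying the Jacobi triple product, one obtains
\[ f_a(q) \;:=\; \sum_{n\ge 0} p\bigl(n\mid \mathrm{parts}\not\equiv 0,\pm a \pmod{3N^2}\bigr)\,q^n \;=\; R(q)\cdot J(q,a), \]
where $J(q,a):=\sum_{n\in\mathbb{Z}}(-1)^n q^{3N^2 n(n-1)/2+an}$. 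The theorem's claim is thus equivalent to the generating-function identity $R(q)\cdot G_m(q)=1$, where
\[ G_m(q) \;:=\; J(q,N(3m+1)) + \sum_{i=1}^m(-1)^i\Bigl[q^{g_i}J(q,N(3m-3i+2)) + q^{g_{-i}}J(q,N(3m-3i+1))\Bigr]. \]

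The core step is to establish the theta identity $G_m(q)=\phi(q)=\sum_{k\in\mathbb{Z}}(-1)^k q^{g_k}$. I expand each $J$-series and simplify the exponent $3N^2 n(n-1)/2+\alpha n+\beta$, invoking the cancellation $3N-(6m+2)=1$ to reduce the linear coefficient in $n$ to $\pm 1$. A direct computation then shows that the $n$-th summand of the main term becomes $(-1)^n q^{g_{Nn}}$, that the first-type shift at index $i$ produces $(-1)^{i+n}q^{g_{i-Nn}}$, and that the second-type shift at index $i$ produces $(-1)^{i+n}q^{g_{Nn-i}}$. Writing $k$ for the subscript of $g$ in each case, the associated sign equals $(-1)^k$ since $N$ is odd.

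The rest is a modular bookkeeping exercise: the main term realises each integer $k\equiv 0\pmod N$ exactly once; for $i=1,\dots,m$ the first-type shifts realise the classes $k\equiv 1,\dots,m\pmod N$; and the second-type shifts realise $k\equiv -1,\dots,-m$, i.e.\ $k\equiv m+1,\dots,2m\pmod N$. Since $N=2m+1$, these $N$ classes partition $\mathbb{Z}/N\mathbb{Z}$ disjointly, so every $k\in\mathbb{Z}$ appears in exactly one of the three groups with coefficient $(-1)^k$. By Euler's pentagonal number theorem this gives $G_m(q)=\phi(q)$, whence $R(q)\cdot G_m(q)=1$ as power series; extracting the coefficient of $q^n$ yields the theorem for every $n\ge 1$, which is stronger than the stated bound $n\ge m(3m+1)/2$.

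The main obstacle is the algebraic telescoping just described: for each of the three families one must check that $\tfrac12[3N^2 n(n-1)+2\alpha n]+\beta$ assembles into $g_k$ with $k$ the correct affine function of $n$ and $i$, and that the sign $(-1)^{i\pm n}$ coincides with $(-1)^k$. Once $3N-(6m+2)=1$ is exploited systematically these checks are routine; no deeper obstruction arises, and the final step is purely combinatorial bookkeeping on residue classes modulo $2m+1$.
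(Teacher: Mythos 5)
Your argument is correct and is essentially the paper's own proof run in reverse: both rest on splitting the pentagonal-number index into its residue classes modulo $2m+1$ and on the same special case of the Jacobi triple product (Andrews, Cor.\ 2.9), with the paper decomposing Euler's product into the restricted products while you reassemble the restricted products' theta series into Euler's pentagonal series. The exponent computations you flag as the main obstacle do check out (e.g.\ $g_i + 3N^2n(n-1)/2 + N(3m-3i+2)n = g_{i-Nn}$ using $3N-(6m+2)=1$), so there is no gap.
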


\noindent
In fact, if we agree that $p(n) = 0$ for $n < 0$, then the theorem is satisfied for all $n$.

Note that Theorem \ref{Fibonacci-like} corresponds to the special case $m=1$. Additionally, the case $m=0$ is trivial. If we set $m=2$, then we acquire

\begin{corollary}
    For $n\geq7$, we have
    \begin{equation}
        \notag
        \abovedisplayskip=1em
        \begin{aligned}
            &\ \ \ \ p(n\mid\notmodulo{\mathrm{parts}}{35,40,75}{75}) \\
            &=p(n-1\mid\notmodulo{\mathrm{parts}}{25,50,75}{75}) \\
            &+p(n-2\mid\notmodulo{\mathrm{parts}}{20,55,75}{75}) \\
            &-p(n-5\mid\notmodulo{\mathrm{parts}}{10,65,75}{75}) \\
            &-p(n-7\mid\notmodulo{\mathrm{parts}}{5,70,75}{75}). \\
        \end{aligned}
        \belowdisplayskip=1em
    \end{equation}
\end{corollary}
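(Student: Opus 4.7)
The plan is to deduce this corollary as a direct specialization of Theorem \ref{general case} to the case $m=2$; no new combinatorial ideas are required. First I would record the basic parameters: with $m=2$ we have $2m+1=5$ and $3(2m+1)^2=75$, so every congruence in the statement is taken modulo $75$, and the hypothesis $n \geq m(3m+1)/2 = 7$ matches the threshold $n\geq 7$ in the corollary.

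Next, I would evaluate, for each $i \in \{0,1,2\}$, the two Eulerian shifts $i(3i-1)/2$ and $i(3i+1)/2$ together with the three forbidden residues $0$, $(2m+1)(3m-3i+2)$, $(2m+1)(3m+3i+1)$ (for the first shift) and $0$, $(2m+1)(3m-3i+1)$, $(2m+1)(3m+3i+2)$ (for the second). For $i=0$ the two terms coincide into a single main term at $n$ with forbidden residues $0,\,5\cdot 7=35,\,5\cdot 8=40$. For $i=1$ I get $p(n-1)$ with forbidden residues $0,\,25,\,50$ and $p(n-2)$ with forbidden residues $0,\,20,\,55$, both carrying sign $(-1)^1=-1$. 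For $i=2$ I get $p(n-5)$ with forbidden residues $0,\,10,\,65$ and $p(n-7)$ with forbidden residues $0,\,5,\,70$, both carrying sign $(-1)^2=+1$.

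Finally, I would rearrange the identity supplied by Theorem \ref{general case} by moving the two $i=1$ terms to the right-hand side with $+$ signs and the two $i=2$ terms to the right-hand side with $-$ signs. Replacing the residue $0$ by the equivalent residue $75 \pmod{75}$ in each restriction then produces exactly the formula stated in the corollary. The only thing to verify with care is the bookkeeping of residues and signs; there is no analytical obstacle since everything follows by substitution.
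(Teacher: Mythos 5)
Your proposal is correct and matches the paper exactly: the corollary is obtained there, as in your sketch, by specializing Theorem \ref{general case} to $m=2$ (so $2m+1=5$, modulus $3(2m+1)^2=75$, threshold $n\geq 7$), computing the shifts and forbidden residues for $i=0,1,2$, and moving the $i=1,2$ terms to the right-hand side with the appropriate signs. All of your residue and sign computations check out.
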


~

\section{Proofs of theorems}

\noindent
As mentioned before, it is sufficient to prove Theorem \ref{general case}.

\begin{proof}[Proof of Theorem \ref{general case}]
    First, we recall the well-known Euler's Pentagonal Number Theorem
    \begin{equation}
        \notag
        \abovedisplayskip=1em
        \prod_{n=1}^\infty(1-q^n)=\sum_{n=-\infty}^\infty(-1)^nq^{\frac{n(3n+1)}{2}}.
        \belowdisplayskip=1em
    \end{equation}

    \noindent
    Substitute $q$ with $q^{1/(2m+1)}$, resulting in
    \begin{equation}
        \notag
        \abovedisplayskip=1em
        \prod_{n=1}^\infty(1-q^{\frac{n}{2m+1}})=\sum_{n=-\infty}^\infty(-1)^nq^{\frac{n(3n+1)}{2(2m+1)}}.
        \belowdisplayskip=1em
    \end{equation}

    \noindent
    Now we divide the summation according to residue classes modulo $2m+1$,
    \begin{equation}
        \notag
        \abovedisplayskip=1em
        \begin{aligned}
            \prod_{n=1}^\infty(1-q^{\frac{n}{2m+1}})&=\sum_{n=-\infty}^\infty(-1)^nq^{\frac{n(3n+1)}{2(2m+1)}}\\
            &=\sum_{i=-m}^m\sum_{\genfrac{}{}{0pt}{}{n=-\infty}{\modulo{n}{i}{2m+1}}}^\infty(-1)^nq^{\frac{n(3n+1)}{2(2m+1)}}.
        \end{aligned}
        \belowdisplayskip=1em
    \end{equation}

    \noindent
    For each value of $i$, we have
    \begin{equation}
        \notag
        \abovedisplayskip=1em
        \begin{aligned}
            \sum_{\genfrac{}{}{0pt}{}{n=-\infty}{\modulo{n}{i}{2m+1}}}^\infty(-1)^nq^{\frac{n(3n+1)}{2(2m+1)}}&=
            \sum_{n=-\infty}^\infty(-1)^{(2m+1)n+i}q^{\frac{[(2m+1)n+i][3(2m+1)n+3i+1]}{2(2m+1)}} \\
            &=(-1)^iq^{\frac{i(3i+1)}{2(2m+1)}}\sum_{n=-\infty}^\infty(-1)^nq^{\frac{(2m+1)3n^2+n(6i+1)}{2}} \\
            &=(-1)^iq^{\frac{i(3i+1)}{2(2m+1)}}\sum_{n=-\infty}^\infty(-1)^nq^{3(2m+1)\frac{n(n+1)}{2}-(3m-3i+1)n}.
        \end{aligned}
        \belowdisplayskip=1em
    \end{equation}

    \noindent
    We now introduce a lemma to address this type of summation (cf. \cite[Corollary 2.9]{andrews1998theory}).
    
    \begin{lemma}
        For $|q|<1$,
        \begin{equation}
            \notag
            \abovedisplayskip=1em
            \begin{aligned}
                &\ \ \sum_{n=-\infty}^\infty(-1)^nq^{(2k+1)n(n+1)/2-in}\\
                &=\prod_{n=0}^\infty(1-q^{(2k+1)(n+1)})(1-q^{(2k+1)n+i})(1-q^{(2k+1)(n+1)-i}).
            \end{aligned}
            \belowdisplayskip=1em
        \end{equation}
    \end{lemma}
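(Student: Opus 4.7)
The plan is to recognize the identity as a direct specialization of the Jacobi Triple Product (JTP), which I will take in the form
\[
\sum_{n=-\infty}^{\infty} z^{n} Q^{n(n+1)/2}
= \prod_{n=1}^{\infty}(1-Q^{n})(1+zQ^{n})(1+z^{-1}Q^{n-1}),
\]
valid for $|Q|<1$ and $z\neq 0$. This is the cited form from Andrews' \emph{The Theory of Partitions}, or can be obtained from the symmetric form $\sum_{n}z^{n}q^{n^{2}}=\prod_{n\geq1}(1-q^{2n})(1+zq^{2n-1})(1+z^{-1}q^{2n-1})$ by the substitution $q\mapsto q^{1/2}$, $z\mapsto zq^{1/2}$.

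First I would set $Q=q^{2k+1}$ and $z=-q^{-i}$. Then on the left hand side
\[
z^{n}Q^{n(n+1)/2}=(-1)^{n}q^{-in}q^{(2k+1)n(n+1)/2},
\]
which is exactly the general term of the series in the lemma, so the LHS matches after substitution.

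Next I would simplify the RHS under the same substitution. One computes
\[
1+zQ^{n}=1-q^{(2k+1)n-i},\qquad 1+z^{-1}Q^{n-1}=1-q^{(2k+1)(n-1)+i},
\]
and then reindex the last product by $n\mapsto n+1$ (so the index starts at $n=0$) and rewrite the other two products likewise so that all three run over $n\geq 0$. This produces
\[
\prod_{n=0}^{\infty}(1-q^{(2k+1)(n+1)})(1-q^{(2k+1)(n+1)-i})(1-q^{(2k+1)n+i}),
\]
which is the stated right hand side.

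I do not expect a genuine obstacle: the only place care is required is in picking a version of JTP whose exponent is $n(n+1)/2$ rather than $n^{2}$ or $n(n-1)/2$, and in bookkeeping the reindexing so the final product matches the exact formatting with factors $(1-q^{(2k+1)(n+1)})$, $(1-q^{(2k+1)n+i})$, $(1-q^{(2k+1)(n+1)-i})$. Convergence is not an issue since $|q|<1$ makes $|Q|<1$, so JTP applies directly.
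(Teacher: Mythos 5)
Your proof is correct: the substitution $Q=q^{2k+1}$, $z=-q^{-i}$ into the triple product in the form $\sum_n z^nQ^{n(n+1)/2}=\prod_{n\ge1}(1-Q^n)(1+zQ^n)(1+z^{-1}Q^{n-1})$ checks out on both sides, and the reindexing to products over $n\ge0$ reproduces the stated factors exactly. The paper does not prove this lemma at all --- it simply cites Corollary 2.9 of Andrews' \emph{The Theory of Partitions}, which is itself obtained as this same specialization of the Jacobi triple product --- so you have supplied, correctly, exactly the standard argument the citation points to.
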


    \noindent
    Therefore,
    \begin{equation}
        \notag
        \abovedisplayskip=1em
        \begin{aligned}
            &\ \ \sum_{\genfrac{}{}{0pt}{}{n=-\infty}{\modulo{n}{i}{2m+1}}}^\infty(-1)^nq^{\frac{n(3n+1)}{2(2m+1)}}\\
            &=(-1)^iq^{\frac{i(3i+1)}{2(2m+1)}}\prod_{n=0}^\infty(1-q^{3(2m+1)(n+1)})(1-q^{3(2m+1)n+3m-3i+1})(1-q^{3(2m+1)n+3m+3i+2}).
        \end{aligned}
        \belowdisplayskip=1em
    \end{equation}

    \noindent
    It is worth noting that $0 < 3m - 3i + 1, 3m + 3i + 2 < 3(2m+1)$ for all $-m \leq i \leq m$.

    Next, we substitute $i$ with $-i$, resulting in
    \begin{equation}
        \notag
        \abovedisplayskip=1em
        \begin{aligned}
            &\ \ \sum_{\genfrac{}{}{0pt}{}{n=-\infty}{\modulo{n}{-i}{2m+1}}}^\infty(-1)^nq^{\frac{n(3n+1)}{2(2m+1)}}\\
            &=(-1)^iq^{\frac{i(3i-1)}{2(2m+1)}}\prod_{n=0}^\infty(1-q^{3(2m+1)(n+1)})(1-q^{3(2m+1)n+3m-3i+2})(1-q^{3(2m+1)n+3m+3i+1}).
        \end{aligned}
        \belowdisplayskip=1em
    \end{equation}

    \noindent
    Hence
    \begin{equation}
        \notag
        \abovedisplayskip=1em
        \begin{aligned}
            &\ \ \ \sum_{i=-m}^m\sum_{\genfrac{}{}{0pt}{}{n=-\infty}{\modulo{n}{i}{2m+1}}}^\infty(-1)^nq^{\frac{n(3n+1)}{2(2m+1)}} \\
            &=\left[\sum_{\genfrac{}{}{0pt}{}{n=-\infty}{\modulo{n}{0}{2m+1}}}^\infty
            +\sum_{i=1}^m\bracket{\sum_{\genfrac{}{}{0pt}{}{n=-\infty}{\modulo{n}{-i}{2m+1}}}^\infty+\sum_{\genfrac{}{}{0pt}{}{n=-\infty}{\modulo{n}{i}{2m+1}}}^\infty}\right](-1)^nq^{\frac{n(3n+1)}{2(2m+1)}}.
        \end{aligned}
        \belowdisplayskip=1em
    \end{equation}

    \noindent
    Now, we use the product formulas obtained earlier to yield the following:
    \begin{equation}
        \notag
        \abovedisplayskip=1em
        \begin{aligned}
            &\ \ \ \,\prod_{n=1}^\infty(1-q^{\frac{n}{2m+1}}) \\
            &=\prod_{n=0}^\infty(1-q^{3(2m+1)(n+1)})(1-q^{3(2m+1)n+3m+1})(1-q^{3(2m+1)n+3m+2})+\sum_{i=1}^m(-1)^i \\
            &\left[q^{\frac{i(3i-1)}{2(2m+1)}}\prod_{n=0}^\infty(1-q^{3(2m+1)(n+1)})(1-q^{3(2m+1)n+3m-3i+2})(1-q^{3(2m+1)n+3m+3i+1})\right. \\
            &+\left.q^{\frac{i(3i+1)}{2(2m+1)}}\prod_{n=0}^\infty(1-q^{3(2m+1)(n+1)})(1-q^{3(2m+1)n+3m-3i+1})(1-q^{3(2m+1)n+3m+3i+2})\right].
        \end{aligned}
        \belowdisplayskip=1em
    \end{equation}

    \noindent
    Then, substitute $q$ with $q^{2m+1}$, resulting in
    \begin{equation}
        \notag
        \abovedisplayskip=1em
        \begin{aligned}
            &\ \ \ \ \prod_{n=1}^\infty(1-q^n)\\
            &=\prod_{n=1}^\infty(1-q^{3(2m+1)^2(n+1)})(1-q^{3(2m+1)^2n+(2m+1)(3m+1)})(1-q^{3(2m+1)^2n+(2m+1)(3m+2)}) \\
            &+\sum_{i=1}^m(-1)^i\left[q^{\frac{i(3i-1)}{2}}\prod_{n=0}^\infty(1-q^{3(2m+1)^2(n+1)})(1-q^{3(2m+1)^2n+(2m+1)(3m-3i+2)})\right.\\
            &\ \ \ \ \ \ \ \ \ \ \ \ \ \ \ \ \ \ \ \ \ \ \ \ \ \ \ \ \ \ \ \ \ \ \ \ \ \ \ \ \ \ \ \ \ \ \ \ \ \ \ \ \ \ \ (1-q^{3(2m+1)^2n+(2m+1)(3m+3i+1)})\\
            &+q^{\frac{i(3i+1)}{2}}\prod_{n=0}^\infty(1-q^{3(2m+1)^2(n+1)})(1-q^{3(2m+1)^2n+(2m+1)(3m-3i+1)})\\
            &\left.\phantom{\frac11}\ \ \ \ \ \ \ \ \ \ \ \ \ \ \ \ \ \ \ \ \ \ \ \ \ \ \ \ \ \ \ \ \ \ \ \ \ \ (1-q^{3(2m+1)^2n+(2m+1)(3m+3i+2)})\right].
        \end{aligned}
        \belowdisplayskip=1em
    \end{equation}

    \noindent
    Divide both sides by $\prod_{n=1}^\infty(1-q^n)$ and compare the coefficients on both sides, which yields the desired result.
\end{proof}

\section{Musings}

\noindent
Is it possible to find a combinatorial explanation for Theorem \ref{Fibonacci-like}?

%\section*{Acknowledgement}

\begin{comment}

\section*{Declarations}

Conflict of interest statement: We declare that we do not have any commercial or associative interest that represents a conflict of interest in connection with the work submitted.

Data availability statement: Not applicable.

\end{comment}

~

% \printbibliography

\end{document}